\documentclass[11pt,a4paper]{amsart}
\usepackage{a4wide}
\usepackage{subfiles}
\usepackage{lipsum}
\usepackage[hidelinks]{hyperref}

\usepackage{enumitem}
\usepackage{amsmath}
\usepackage{amssymb}
\usepackage{mathtools}
\usepackage{amsfonts}
\usepackage{microtype}
\usepackage{multicol}
\usepackage{subcaption}
\usepackage{xcolor}
\usepackage{stackengine,scalerel}
\usepackage{tikz}
\usepackage{csquotes}
\usepackage{tikz-cd}
\usepackage{amsthm}
\usepackage{tcolorbox}
\usepackage[capitalise]{cleveref}
\usepackage[utf8]{inputenc}
\usepackage[english]{babel}

\DeclareMathAlphabet{\mathpzc}{OT1}{pzc}{m}{it}

\usepackage{newunicodechar}
\newunicodechar{⟨}{\langle}
\newunicodechar{⟩}{\rangle}

\usepackage[style=alphabetic,backend=biber,maxnames=5,maxalphanames=5,language=english,doi=false,isbn=false,url=true]{biblatex}
\bibliography{literature.bib}
  
\renewbibmacro{in:}{}
\renewbibmacro*{issue+date}{%
  \ifboolexpr{not test {\iffieldundef{year}} or not test {\iffieldundef{issue}}}
    {\printtext[parens]{%
       \iffieldundef{issue}
         {\usebibmacro{date}}
         {\printfield{issue}%
          \setunit*{\addspace}%
          \usebibmacro{date}}}}
    {}%
  \newunit}
\let\oldtocsection=\tocsection
\let\oldtocsubsection=\tocsubsection
\let\oldtocsubsubsection=\tocsubsubsection
\renewcommand{\tocsection}[2]{\hspace{0em}\oldtocsection{#1}{#2}}
\renewcommand{\tocsubsection}[2]{\hspace{1em}\oldtocsubsection{#1}{#2}}
\renewcommand{\tocsubsubsection}[2]{\hspace{2em}\oldtocsubsubsection{#1}{#2}}

\newtheorem{thm}{Theorem}
\newtheorem{lem}[thm]{Lemma}

\theoremstyle{definition}

\newtheorem*{nrem}{Remark}

\setenumerate[1]{leftmargin=*,labelindent=0pt,label=(\roman*),}

\newcommand{\pref}[2]{\hyperref[#2]{#1 \ref*{#2}}}
\usepackage{todonotes}

\usepackage[T1]{fontenc}
 \usepackage{XCharter}

\begin{document}

\title{Side lengths of cubes with vertices in $\mathbb Z^n$}

\begin{abstract} 
We determine the set of side lengths of $d$-dimensional cubes with vertices in $\mathbb Z^n$ using Witt's cancellation theorem from the algebraic theory of quadratic forms. 

\end{abstract}

\author[C.~Bernert]{Christian Bernert}

\address{Institute of Science and Technology Austria, Am Campus 1, 3400 Klosterneuburg, Austria}

\email{christian.bernert@ist.ac.at}

\author[J.~Reinhold]{Jens Reinhold}

\address{Sankt Augustin, Germany}

\email{jens.reinhold@posteo.de}

\subjclass[2020]{11E25 (11E81, 52B20)}

\maketitle
\section{Background}

In the 18th century, a central problem in number theory was to determine the set 
\[I_{n} = \left\{\sum_{j = 1}^{n} a_j^2 \ | \ a_1, \dots, a_{n} \in \mathbb Z\right\}\]
of integers that can be expressed as a sum of $n$ perfect squares. It was resolved by three advances that shaped the development of modern number theory:

\begin{enumerate}
    \item Fermat observed that $m \in I_2$ if and only if each prime $p \equiv 3 \pmod{4}$ divides $m$ an even number of times. This result is known as Fermat's theorem on sums of two squares today, even though it was Euler who first published a proof in 1747. 
    \item In 1770, Lagrange proved that every non-negative integer is a sum of four squares, showing that $I_4=\mathbb{N}_0$. Hence also $I_n=\mathbb{N}_0$ for all $n \ge 4$. 
    \item Finally, sums of three squares turned out to be the hardest case. In 1797, Legendre proved that $m \in I_3$ unless $m=4^k\ell$ with $\ell \equiv 7 \pmod{8}$.
\end{enumerate}

We refer the reader to \cite[Chapter~XX]{Hardy-Wright} for a discussion of these classical results. We remark that Fermat's theorem led to the rich algebraic theory of binary quadratic forms, intimately linked to the factorization of integers in quadratic number fields, with its generalizations anticipating modern class field theory. Meanwhile, by the work of Jacobi, Lagrange's theorem can be linked to the fact that the associated theta series is a modular form, thus providing an early and important example for the theory of automorphic forms. While sums of two squares as norms of Gaussian integers and sums of four squares as norms of integral quaternions inherit a useful multiplicative structure, this is not the case for sums of three squares. Nonetheless, as observed by Gauß, one can essentially identify representations of $n$ as a sum of three squares with classes of binary quadratic forms of discriminant $-n$.

\section{A geometric generalization}

Geometrically, describing $I_n$ amounts to determining all possible lengths of vectors in $\mathbb{Z}^n$. This point of view leads to a generalization: What happens if we replace vectors by (hyper)cubes? To this end, for a given positive integers $d \le n$, let us denote by $J(d,n)$ the set of squares of side lengths of $d$-dimensional hypercubes with vertices in $\mathbb{Z}^n$. In particular, note that  $J(1,n) = I_n$ and $J(2,2)=J(1,2)=I_2$: indeed, any integer vector $(a,b) \in \mathbb{Z}^2$ can be extended to a square in the plane by the vector $(-b,a)$ as shown in Figure \ref{fig:j22}.

\bigskip
\begin{figure}[h]
\begin{center}
    \begin{tikzpicture}[scale=0.5]

    \draw[step=1, gray!40, thin] (-3,-1) grid (4,6);

    \draw[->, thick] (-3,0) -- (4,0);
    \draw[->, thick] (0,-1) -- (0,6);

    \coordinate (O) at (0,0);
    \coordinate (A) at (3,2);
    \coordinate (B) at (1,5); 
    \coordinate (C) at (-2,3);

    \draw[thick, blue] (O) -- (A) -- (B) -- (C) -- cycle;

    \draw[->, very thick, red] (O) -- (A) node[pos=0.7, below right] {$(a,b)$};
    \draw[->, very thick, red] (O) -- (C) node[pos=1.1, above] {$(-b,a)$};

    \fill (O) circle (3pt);

    \foreach \x in {-3,...,4}
        \foreach \y in {-1,...,6}
            \fill (\x,\y) circle (1pt);

    \end{tikzpicture} 
    \caption{}\label{fig:j22}
\end{center}
\end{figure}

On the other hand, note that $3=1^2+1^2+1^2 \in J(1,3)=I_3$, but there is no way of extending any of the witnessing vectors $(\pm 1,\pm 1, \pm 1)$ to a square (let alone a cube) in $\mathbb{Z}^3$, showing that $3 \not \in J(2,3)$ and hence $J(2,3) \subsetneq J(1,3)$. We can now state our main result:

\begin{thm} \label{thm:main} For positive integers $d \leq n$, the set $J(d,n)$ is given by the entries of the following table.
\begin{center}
\begin{tabular}{ c | c c c c}
 & $d \text{ mod } 4$ & $\quad \quad$ & $ \quad \quad$ & $\quad \quad$\\ 
n-d  & 1 & 2 & 3 & $0$ \\ 
\hline
0   & $I_1$ & $I_2$ & $I_1$ &  $\mathbb N_0$\\ 
1   & $I_2$ & $I_2$ & $\mathbb N_0$ &  $\mathbb N_0$ \\ 
2   & $I_3$ & $\mathbb N_0$  & $\mathbb N_0$ &  $\mathbb N_0$ \\ 
$\geq 3$   & $\mathbb N_0$  & $\mathbb N_0$  & $\mathbb N_0$ &  $\mathbb N_0$ \\ 
\end{tabular}
\end{center}
\end{thm}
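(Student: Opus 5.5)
Throughout, $m \in J(d,n)$ means there are $d$ pairwise orthogonal vectors $v_1,\dots,v_d\in\mathbb Z^n$ with $\langle v_i,v_i\rangle=m$; the case $m=0$ is trivial, so let $m\ge 1$. Equivalently, writing $A$ for the $d\times n$ integer matrix with rows $v_i$, we have $AA^T=mI_d$. Over $\mathbb Q$ this says the form $m\langle1,\dots,1\rangle_d$ is a subform of $\langle 1,\dots,1\rangle_n$. I will write $mI_d$ and $I_n$ for these forms. I would prove the theorem in two halves: explicit constructions for the lower bounds, and quadratic form arguments (Witt cancellation) for the upper bounds.

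\emph{Constructions.} For $d=4$, the matrix of left multiplication by the integral quaternion $a+bi+cj+dk$ has orthogonal rows of norm $a^2+b^2+c^2+d^2$. By Lagrange this gives every $m$ for $(d,n)=(4,4)$, and block-diagonal copies give every $m$ for $n=d\equiv0 \pmod 4$. Whenever the interval $[d,n]$ contains a multiple $4k$ of $4$, I take $d$ of the $4k$ rows and pad with zero coordinates, which yields $J(d,n)=\mathbb N_0$. This covers every $\mathbb N_0$ entry of the table: $n-d\ge3$; $d\equiv 3$ with $n-d\ge1$; $d\equiv 2$ with $n-d\ge 2$; $d\equiv 0$.

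The remaining entries are obtained by combining a $4k$-block with a small block:
\begin{itemize}
\item an integer multiple $aI$ of the identity, for $m=a^2$;
\item the $2\times2$ block $\begin{pmatrix} a & b\\ -b & a\end{pmatrix}$, for $m\in I_2$, using either both rows or only one of them;
\item a single vector of $\mathbb Z^3$, for $m\in I_3$.
\end{itemize}
For example, $d=4k+1$, $n=d+1$ uses the $4k$-block together with one row of the $2\times 2$ block inside $\mathbb Z^{4k+2}$. The cases $d=4k+2$ with $n\in\{d,d+1\}$, $d=4k+1$ with $n=d+2$, and $d\equiv 1,3$ with $n=d$ are handled the same way. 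For $d=1,2,3$ we simply have $k=0$.

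\emph{Restrictions.} Assume $mI_d$ embeds in $I_n$ over $\mathbb Q$. Then $I_n\cong mI_d\perp q$ for some form $q$ of dimension $n-d$. The key observation is that $mI_4\cong I_4$ over $\mathbb Q$, by the quaternion matrix above together with Lagrange. So for $d=4k+r$ with $r\in\{1,2,3\}$, Witt's cancellation theorem lets me cancel $I_{4k}$ and reduce to $I_{r+(n-d)}\cong mI_r\perp q$.

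If $n=d$ and $r$ is odd, taking determinants gives $m^d=\det(A)^2$, so $m$ is a square. If $n=d$ and $r=2$, we get $mI_2\cong I_2$, so $m$ is a sum of two rational squares. If $n=d+1$ and $r=1$, then $m$ is represented by $I_2$ over $\mathbb Q$. If $n=d+1$ and $r=2$, the determinant forces $q\cong\langle 1\rangle$, and cancelling it gives again $mI_2\cong I_2$. If $n=d+2$ and $r=1$, then $m$ is represented by $I_3$ over $\mathbb Q$.

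Finally, I pass from rational to integral representations. An integer that is a sum of two rational squares is a sum of two integer squares, by Fermat's criterion, since the $p\equiv3 \pmod 4$ condition is a statement about $p$-adic valuations. For three squares, a rational representation excludes $m=4^a(8b+7)$ (a $2$-adic obstruction), so Legendre's theorem applies; alternatively one can invoke Davenport--Cassels.

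I expect the main obstacle to be the bookkeeping in the restriction step. One has to make sure that the reduction via $mI_4\cong I_4$ and Witt cancellation is applied correctly in every residue class. In particular, in the case $n=d+1$, $r=2$ one must check that the complement $q$ is forced to be $\langle1\rangle$ up to squares, which follows from comparing determinants.
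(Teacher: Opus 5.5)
Your proposal is correct and follows essentially the same route as the paper. Both use quaternion blocks plus Lagrange for the constructions, $mI_4\cong I_4$ with Witt cancellation to reduce $d$ modulo $4$, a determinant (volume) argument for $n=d$ odd, and the Fermat/Legendre characterizations to pass from rational to integral representations. The only real difference is in the case $d\equiv 2 \pmod 4$, $n=d+1$: you force the complement to be $\langle 1\rangle$ by comparing determinants, while the paper exhibits it explicitly as the cross product of norm $m^2$, which is the same fact made concrete.
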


Here of course $I_1$ is just the set of perfect squares itself, while $I_2$ and $I_3$ are described by Fermat's and Legendre's results in the previous section. Note that $J(d,n)$ only depends on $n-d$ and the value of $d$ modulo $4$.

\begin{nrem} The first row of this chart was determined by  S.~Biderman and N.~Elkies in a remarkable post on Mathoverflow \cite{MO-post} that inspired this note.
\end{nrem}

\section{Proofs}
As suggested by Elkies, our argument will use the algebraic theory of quadratic forms, and specifically Witt's cancellation theorem. Since this becomes much simpler when working over a field rather than a ring, it is natural to study the set $J^{\mathbb{Q}}(d,n)$ of integers which appear as squares of the side lengths of $d$-dimensional cubes with vertices in $\mathbb{Q}^n$.

Indeed, our argument will show that $J^{\mathbb{Q}}(d,n)=J(d,n)$ for all $(d,n)$, but we are not aware of any \textit{a priori} reason to deduce this, only the inclusion $J(d,n) \subset J^{\mathbb{Q}}(d,n)$ is immediate.

We begin by recalling some facts and notation from the algebraic theory of quadratic forms over $\mathbb{Q}$. Identifying an $n$-ary quadratic form with a symmetric matrix $Q \in \mathbb{Q}^{n \times n}$, we say that two such forms $Q_1,Q_2$ are equivalent if $Q_2=A^tQ_1A$ for some invertible matrix $A \in \mathbb{Q}^{n \times n}$. We write $Q_1 \simeq Q_2$.
We can also form the direct sum of two quadratic forms: If $Q_1 \in \mathbb{Q}^{n_1 \times n_1}$ and $Q_2 \in \mathbb{Q}^{n_2 \times n_2}$, then we put $(Q_1 \oplus Q_2)(\mathbf{x},\mathbf{y}):=Q_1(\mathbf{x})+Q_2(\mathbf{y})$ where $(\mathbf{x},\mathbf{y}) \in \mathbb{Q}^{n_1+n_2}$. In terms of matrices this corresponds to forming the block matrix $\begin{pmatrix} Q_1 & 0\\ 0 & Q_2 \end{pmatrix}$. We denote a $k$-fold sum by $k \cdot Q:=Q \oplus Q \oplus \dots \oplus Q$. Finally, we denote by $\langle a_1,\dots,a_n\rangle=\langle a_1\rangle \oplus \dots \oplus \langle a_n\rangle$ the diagonal quadratic form $Q(x_1,\dots,x_n)=\sum_{i=1}^n a_ix_i^2$ represented by the matrix $\text{diag}(a_1,\dots,a_n)$.

We can now describe the relation between these notions and our problem. Indeed, suppose that we are given an orthogonal basis of vectors $v_1,\dots,v_n \in \mathbb{Q}^n$. Then the matrix formed by these vectors describes an equivalence between the $n$-ary quadratic forms $n \cdot \langle 1\rangle=\langle 1,1,\dots,1\rangle$ and $\langle \|v_1\|^2,\dots,\|v_n\|^2\rangle$. Conversely, if the forms are equivalent, we may obtain such an orthogonal basis from the transformation matrix.

Henceforth, our interest is shifted to a criterion for equivalence of forms. This is given by the following result; for a proof see e.g. \cite{Pfister}, Chapter 2.

\begin{lem}[Witt's cancellation theorem] \label{thm:witt}
Let $\varphi,\varphi_1,\varphi_2$ be quadratic forms over $\mathbb{Q}$ such that $\varphi \oplus \varphi_1 \simeq \varphi \oplus \varphi_2$. Then $\varphi_1 \simeq \varphi_2$.
\end{lem}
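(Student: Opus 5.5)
The plan is to reduce, by diagonalization and induction, to cancelling a single one-dimensional form $\langle a\rangle$, and to do that with a reflection. Throughout, we work with quadratic spaces $(V,q)$ over $\mathbb{Q}$, with associated symmetric bilinear form $b(x,y)=\tfrac12\bigl(q(x+y)-q(x)-q(y)\bigr)$. This is available because $2$ is invertible in $\mathbb{Q}$.

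\textbf{Reduction steps.} Every form over $\mathbb{Q}$ can be diagonalized, by the usual Gram--Schmidt argument in characteristic $\neq 2$. So we may write $\varphi\simeq\langle a_1,\dots,a_k\rangle$. Cancelling the summands $\langle a_i\rangle$ one at a time, induction on $k$ reduces the lemma to the case $\varphi=\langle a\rangle$.

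If $a=0$, we argue via radicals. Write each form as a nondegenerate part plus a zero form. The dimension of the radical is an invariant of equivalence, since it is $n$ minus the rank of the Gram matrix, and equivalence preserves rank. Hence $\langle 0\rangle\oplus\varphi_1\simeq\langle 0\rangle\oplus\varphi_2$ forces $\varphi_1$ and $\varphi_2$ to have radicals of equal dimension. An equivalence induces an isometry of the quotients by the radicals, and these quotients carry the nondegenerate parts. So $\varphi_1$ and $\varphi_2$ have isometric nondegenerate parts and equal-dimensional zero parts, which gives $\varphi_1\simeq\varphi_2$.

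\textbf{The case $a\neq 0$.} We may assume the equivalence is realized on a single space $V$: there are vectors $e,f\in V$ with $q(e)=q(f)=a$ and
\[
V=\mathbb{Q}e\perp U_1=\mathbb{Q}f\perp U_2, \qquad U_i\simeq\varphi_i .
\]
We have $U_1=e^\perp$, because $b(e,e)=a\neq0$, and likewise $U_2=f^\perp$. It therefore suffices to find an isometry $\sigma$ of $V$ with $\sigma(e)=f$, since such a $\sigma$ maps $e^\perp$ isometrically onto $f^\perp$.

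For a vector $w$ with $q(w)\neq0$, the reflection
\[
\tau_w(x)=x-\frac{2b(x,w)}{q(w)}\,w
\]
is an isometry. The key computation is the parallelogram identity
\[
q(e+f)+q(e-f)=2q(e)+2q(f)=4a\neq 0,
\]
so at least one of $q(e-f)$ and $q(e+f)$ is nonzero.

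If $q(e-f)\neq0$, a short computation using $q(e)=q(f)$ shows $\tau_{e-f}(e)=f$. If instead $q(e+f)\neq0$, then $\tau_{e+f}(e)=-f$, and composing with the reflection $\tau_f$ (allowed since $q(f)=a\neq0$) gives an isometry sending $e\mapsto f$.

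\textbf{Main obstacle.} I expect the only delicate points to be these two verifications: that the reflection really sends $e$ to $\pm f$, which uses $2b(e,e-f)=q(e-f)$ when $q(e)=q(f)$, and the bookkeeping for degenerate forms. The reflection argument itself is standard.
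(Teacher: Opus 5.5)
Your proof is correct and complete. The steps all check out:
\begin{itemize}
\item the reduction to $\varphi=\langle a\rangle$ by diagonalization and induction;
\item the identification $U_1=e^\perp$, $U_2=f^\perp$, which holds even when $V$ is degenerate, by a dimension count using $b(e,e)=a\neq 0$;
\item the choice between $\tau_{e-f}$ and $\tau_f\circ\tau_{e+f}$, justified by $q(e+f)+q(e-f)=4a\neq 0$.
\end{itemize}

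The paper gives no proof of this lemma and simply refers to Pfister, Chapter 2. Your argument is the standard reflection proof found in the textbook literature.

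Your separate radical-based treatment of the case $a=0$ covers degenerate forms. That matches the statement as literally phrased, but it is more than the paper needs. Every use of the lemma there involves only positive definite diagonal forms: the cancellations $4\cdot\langle m\rangle\simeq 4\cdot\langle 1\rangle$ and $\langle m^2\rangle\simeq\langle 1\rangle$, with the remaining summands built from squared lengths of nonzero rational vectors.
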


We prepare the stage for proving \cref{thm:main} by studying the special case $d=n=4$ which will serve as a vehicle to translate between $J(d,n)$ and $J(d+4,n+4)$ later on.

\begin{lem}\label{lem:fourdim}
We have $J(4,4)=\mathbb{N}$. In particular, for any $m \ge 1$, the quadratic forms $4 \cdot \langle m\rangle$ and $4 \cdot \langle 1\rangle$ are equivalent.
\begin{proof}
Let $m$ be a positive integer. By Lagrange's theorem, we may write $m=a^2+b^2+c^2+d^2$ with integers $a,b,c,d$. Now one checks that the rows of the matrix
\[\begin{pmatrix} a & b & c & d\\-b & a & -d & c\\-c & -d & a & b\\-d & c & -b & a \end{pmatrix}\]
describe an orthogonal basis of $\mathbb{Z}^4$ consisting of integer vectors of length $\sqrt{m}$.
Indeed, one may motivate this construction by viewing the vector $(a,b,c,d)$ as the quaternion $q=a+bi+cj+dij$ and then considering the basis $(q,qi,qj,qk)$.
\end{proof}
\end{lem}

Equipped with this observation, we are able to prove our key lemma which reduces the proof of the main result to a handful of cases.

\begin{lem}
For $n \ge d \ge 1$, we have $J^{\mathbb{Q}}(d,n)=J^{\mathbb{Q}}(d+4,n+4)$. Moreover, $J(d,n) \subset J(d+4,n+4)$.
\begin{proof}
Suppose that $m \in J^{\mathbb{Q}}(d,n)$, so we are given $d$ orthogonal vectors in $\mathbb{Q}^n$ of length $\sqrt{m}$. Taking the direct sum with the matrix from \cref{lem:fourdim}, we may extend these to $d+4$ orthogonal vectors in $\mathbb{Q}^{n+4}$.
Hence $m \in J^{\mathbb{Q}}(d+4,n+4)$ as desired. Moreover, starting with vectors in $\mathbb{Z}^n$, the construction in fact yields vectors in $\mathbb{Z}^{n+4}$, so that $J(d,n) \subset J(d+4,n+4)$.

It remains to show that $J^{\mathbb{Q}}(d+4,n+4) \subset J^{\mathbb{Q}}(d,n)$. Starting with $d+4$ orthogonal vectors in $\mathbb{Q}^{n+4}$, we can extend them arbitrarily to an orthogonal basis of $\mathbb{Q}^{n+4}$. By our previous comments, this means that
\[(d+4) \cdot \langle m\rangle\oplus \langle \ell_1,\dots,\ell_{n-d}\rangle \simeq (n+4) \cdot \langle 1\rangle\]
for certain rational numbers $\ell_1,\dots,\ell_{n-d}$. By \cref{lem:fourdim}, we know that $4 \cdot \langle m\rangle \simeq 4 \cdot \langle 1\rangle$, so Witt's cancellation theorem implies that
\[d \cdot \langle m\rangle\oplus \langle \ell_1,\dots,\ell_{n-d}\rangle \simeq n \cdot \langle 1\rangle.\]
Translating back, we then find an orthogonal basis of $\mathbb{Q}^n$ containing $d$ vectors of length $\sqrt{m}$, so that indeed $m \in J^{\mathbb{Q}}(d,n)$.
\end{proof}
\end{lem}

\textit{Proof of Theorem~\ref{thm:main}:} By \cref{lem:fourdim}, it suffices to show that for $d \le 4$, we have $J(d,n)=J^{\mathbb{Q}}(d,n)$ and that this set is given by the corresponding entry in the table.

Moreover, since $J(4,4)=\mathbb{N}$ it is clear that $J(d,n)=\mathbb{N}$ whenever $d \le 4 \le n$ so that we may restrict to $d,n \le 3$.
If $d=1$ and $n \in \{1,2,3\}$ we have $J(1,n)=I_n$ by definition. Moreover, it is an easy consequence of the characterization of $I_2$ and $I_3$ that indeed $J^{\mathbb{Q}}(1,n)=I_n$.

If $d=n=3$, it is clear that $I_1 \subset J(3,3)$. Conversely, any cube (indeed any polyhedron) whose vertices have rational coordinates must have rational volume, hence if $m \in J^{\mathbb{Q}}(3,3)$, then $(\sqrt{m})^3$ must be rational and hence $m \in I_1$ as desired.

Finally, if $d=2$ it is clear that $I_2 = J(2,2) \subset J(2,3) \subset J^{\mathbb{Q}}(2,3)$. So we are left to prove that $J^{\mathbb{Q}}(2,3) \subset I_2$.
This requires one final trick: Suppose that $m \in J^{\mathbb{Q}}(2,3)$ so that we are given two orthogonal vectors of length $\sqrt{m}$ in $\mathbb{Q}^3$. We argue as in the proof of \cref{lem:fourdim}, but this time we choose a specific extension to an orthogonal basis. More precisely, we may choose the third vector to be the cross product of the two initial two vectors, having length $m$. We conclude that
\[\langle m,m,m^2\rangle \simeq \langle 1,1,1\rangle.\]
Noting that $\langle m^2\rangle \simeq \langle 1\rangle$, we can apply Witt cancellation again and find that
\[\langle m,m\rangle \simeq \langle 1,1\rangle\]
so that, in particular, $m$ is represented by $\langle 1,1\rangle$ and hence a sum of two rational squares, and then by our previous remarks indeed a sum of two integer squares. \qed

\begin{nrem} Let us note that the use of Witt's theorem can be transformed into a constructive version of our result, at least when rational coordinates are allowed. For instance, in the case of $J(2,3)$, let us follow through our argument: One finds that from two perpendicular vectors $(a,b,c)$ and $(d,e,f)$ of length $\sqrt{m}$, i.e. a solution of the system
\begin{align*}
    a^2+b^2+c^2 &=m\\
    d^2+e^2+f^2 &=m\\
    ad+be+cf &=0
\end{align*}
we obtain, using the cross-product as a third basis vector, the identity
\[(ax+dy+(bf-ce)z)^2+(bx+ey+(cd-af)z)^2+(cx+fy+(ae-bd)z)^2=mx^2+my^2+(mz)^2\]
of ternary quadratic forms. Witt's theorem then amounts to the observation that we may enforce $cx+fy+(ae-bd)z=mz$ by choosing $z=\frac{cx+fy}{bd-ae-m}$, where we may assume that $bd-ae \ne m$ by changing the signs if necessary. We may then choose $x=1, y=0$ to obtain the representation 
\[m=\left(\frac{m(d-b)}{bd-ae-m}\right)^2+\left(\frac{m(a+e)}{bd-ae-m}\right)^2\]
of $m$ as a sum of two rational squares.
\end{nrem}

\printbibliography

@article {MO-post,
    AUTHOR = {Biderman, S. and Elkies, N.},
     TITLE = {Characterization of Volumes of Lattice Cubes},
   JOURNAL = {Mathoverflow},
   YEAR = {2014},
       URL = {https://mathoverflow.net/questions/161141/characterization-of-volumes-of-lattice-cubes},
}

@book {Pfister,
    AUTHOR = {Pfister, Albrecht},
     TITLE = {Quadratic forms with applications to algebraic geometry and
              topology},
    SERIES = {London Mathematical Society Lecture Note Series},
    VOLUME = {217},
 PUBLISHER = {Cambridge University Press, Cambridge},
      YEAR = {1995},
     PAGES = {viii+179},
}

@book {Hardy-Wright,
    AUTHOR = {Hardy, G. H. and Wright, E. M.},
     TITLE = {An introduction to the theory of numbers},
   EDITION = {Sixth},
      NOTE = {Revised by D. R. Heath-Brown and J. H. Silverman,
              With a foreword by Andrew Wiles},
 PUBLISHER = {Oxford University Press, Oxford},
      YEAR = {2008},
     PAGES = {xxii+621},
      ISBN = {978-0-19-921986-5},
   MRCLASS = {11-01},
  MRNUMBER = {2445243},
}
\nopagebreak
\end{document}